 \newtheorem{theorem}{Theorem}[section]
\newtheorem{lemma}[theorem]{Lemma}
\newtheorem{proposition}[theorem]{Proposition}
\theoremstyle{definition}
\newtheorem{definition}[theorem]{Definition}
\newtheorem{example}[theorem]{Example}
 \theoremstyle{remark}
\newtheorem{remark}[theorem]{Remark}
\numberwithin{equation}{section}
\begin{document}

 \title[Lorentzian non-stationary dynamical systems ]{Lorentzian non-stationary dynamical systems }
\author[  MohammadReza Molaei and  Najmeh Khajoei ]{{ $^{\mathrm{a}}$MohammadReza Molaei and $^{\mathrm{a,b}}$Najmeh Khajoei  
}\\$^{\mathrm{a}}${Mahani Mathematical Research Center Shahid Bahonar University of Kerman, Kerman, 76169-14111, Iran.\\$^{\mathrm{b}}$Young Research Society, Shahid Bahonar University of Kerman,
Kerman, 76169-14111, Iran.\\e-mails:  mrmolaei@uk.ac.ir, khajuee.najmeh@yahoo.com}}

\maketitle

\begin{abstract}
In this paper, we introduce a Lorentzian Anosov family ($LA$-family) up to a sequence of distributions of null vectors. We prove for each $p\in M_i$, where $M_i$ is a Lorentzian manifold for $i\in\mathbb{Z}$ the tangent space $M_i$ at $p$ has a unique splitting and this splitting varies continuously on a sequence via the distance function created by a unique torsion-free semi-Riemannian connection. We present three examples of $LA$-families. Also, we define Lorentzian shadowing property of type $I$ and $II$ and prove some results related to this property.
\end{abstract}
\noindent AMS subject classification: 37D05, 37D20, 53B30, 37C50\\
{Keywords:} Lorentzian Anosov family; Semi-Riemannian connection;
Lorentzian solenoid; Lorentzian shadowing property of type $I$ and $II$
\section{Introduction}
There are some fantastic results on Riemannian manifolds which can be extendable for Lorentzian manifolds. For instance, in \cite{r1} Porwal and Mishra introduced a new class of geodesic local E-convex sets and geodesic local (semilocal) E-convex functions on Riemannian manifolds and studied their features and in \cite{r2} pointwise slant submersions from almost product Riemannian manifolds to Riemannian manifolds were investigated by Sepet and Ergut. Also, Anosov family as a generalization of Anosov diffeomorphisms were introduced in $2005$ by P. Arnoux and A. M. Fisher \cite{n5} is a non-stationary dynamical system $\{f_i\}_{i\in\mathbb{Z}} $ defined on a sequence of compact Riemannian manifolds $\{M_i\}_{i\in\mathbb{Z} }$. Since we are enthusiastic about the dynamical system on Lorentzian manifolds then the concept of Anosov families on Lorentzian manifolds is extended.  
  For this purpose we take a sequence of Hausdorff Lorentzian manifolds $\{M_i\}_{i\in\mathbb{Z} }$ of the same dimension $m$ with Lorentzian metrics $\{g_i(. , .)\}_{i\in\mathbb{Z}}$.
We recall that a Lorentzian
metric $g_i$ for a manifold $M_i$ as a tensor field of type $(0, 2)$ with the diagonal form $(-, +, ..., +)$ at each point $p\in M_i$ \cite{n8, n9}.
Nonzero tangent vectors are classified as timelike, spacelike, nonspacelike,
or null according to whether $g_i(v,v)<0, ~g_i(v,v)>0,~g_i(v,v)\leqslant 0$ or $g_i(v,v)=0$ \cite{n7}. We take $M$ as a disjoint union of $M_i$ i.e. $M=\displaystyle\bigcup_{i\in\mathbb{Z}}^0=\{(i,m_i)~ \vert~ m_i\in M_i~ ,~ i\in\mathbb{Z}\}$. The topology of $M$ is the set $\tau = \{ \{i\} \times V_i ~\vert ~ V_i ~\text{is an open set in}~ M_i\}$ and it's atlas is the set of charts of the form $A=\{(\tilde{U}_i, \tilde{\phi}_i) \}$ such that $\tilde{U}_i=\{i\}\times U$ and $\tilde{\phi}_{\tilde{U_i}} : \tilde{U}_i\rightarrow \mathbb{R}^m$ is defined by $(i, m)\mapsto \phi_{U}(m)$ where $(U, \phi_U)$ is a chart for $M_i$ and $i\in\mathbb{Z}$. Therefore the inclusion $i: M_{i}\hookrightarrow M$ is a diffeomorphims
onto its image. So
each subset of $M_i$ is recognized with a subset of $M$, hence each chart on $M_i$ gives a chart on
$M$. These charts are expanded by the former method to a maximal atlas for $M$.
The construction of $M$ as a manifold implies that if the image of a curve $\gamma : (-\epsilon, \epsilon)\rightarrow M$ is in a chart $(\tilde{U_i}, \phi_{\tilde U_i})$, then there is a unique $i_0\in \mathbb{Z}$ such that $\gamma(-\epsilon, \epsilon)\subseteq M_{i_0}$.

 In section $3$, we extend Anosov family to a Lorentzian Anosov family up to a sequence of distribution $p\mapsto E_i^n(p)$, where $ E_i^n(p) $ is null subspace of the tangent space of the ambient Lorentzian manifold $M_i$ that is an extension of hyperbolic autonomous discrete dynamical system on semi-Riemannian manifold \cite{n1} to non-autonomous one. In proposition \ref{p1} we prove the tangent space of $M$ at a given $p$ has a unique splitting. In theorem \ref{t1} we determine the form of splitting.\\
In section $4$ we provide two examples of Lorentzian Anosov families. In section $5$ by using of the results of \cite{n3} for non-autonomous discrete dynamical system and induced dynamics in hyperspace of them, we define Lorentzian shadowing property of type $I$ and type $II$ on $L$-family $(M, \langle . , .\rangle, F)$ and on the induced dynamic by it. In theorem \ref{u2} we prove shadowing property of type $I$ is invariant by uniformly conjugacy and the product of two $L$-families has shadowing property of type $I$ if and only if they have shadowing property of type $I$. In proposition \ref{u3} we prove if $L$-family $(M, g, F)$ has shadowing property of type $ II $ then it has shadowing property of type $ I $.\\
\section{Preliminaries}
We begin this section by recalling the concepts of connections and parallel translation in semi-Riemannian geometry which the Lorentzian geometry is a special case of it \cite{n7}.\\
Let $\chi(M)$ denote the set of all smooth vector fields defined on $M$ and let $\mathcal{F}(M)$ denote the ring of all smooth real-valued functions on $M$. A connection is a mapping $\nabla : \chi(M) \times \chi(M)\rightarrow\chi(M)$ with the following properties \\
$i)~ \nabla_{V}(X+Y)=\nabla_V X +\nabla_V Y;$\\
$ii)~ \nabla_{f V+h W} (X) =f \nabla_V (X) +h\nabla_V(Y);$\\
$iii) ~\nabla_V(f X)=f \nabla_V X+ V(f) X;$\\
for all $f, h\in\mathcal{F}(M)$ and all $X, Y, V, W\in \chi(M)$.\\
The vector $\nabla_{X(p)} Y= \nabla_{X} Y\vert_{p}$ at a point $p\in M$ depends only on the value $X(p)=X_p$ of $X$ at $p$ and the values of $Y$ along any smooth curve which passes through $p$ which has the velocity $X(p)$ at $p$. Let the connection $\nabla$ on $M$, a curve $\gamma : [-\epsilon, \epsilon]\rightarrow M$ and a smooth vector field $Y$ along $\gamma$ be given. Then for $t_0\in [-\epsilon, \epsilon]$ we can locally extend $Y$ to a smooth vector field defined on a neighborhood of $\gamma(t_0)$. We denote the covariant derivative of $Y\in\chi(M)$ along $\gamma$ by $\dfrac{D Y}{dt}$ and it is defined by $\nabla_{\gamma'(t)} Y$.
A vector field $Y$ along $\gamma$ which satisfies $\nabla_{\gamma' (t)} Y(t)=0$ for all $t\in [-\epsilon, \epsilon]$ is called a parallel vector field along $\gamma$. If $v\in T_p M$ and $\gamma : (-\epsilon, \epsilon)\rightarrow M$ is a smooth curve passing through $p$, that is, $\gamma(0)=p$, then it is proved that there is a unique parallel vector field $Y$ along $\gamma$ with $Y_p=v$. The mapping $P_t : T_p M\rightarrow T_{\gamma(t)} M$, $v\mapsto Y_{\gamma(t)}$ is called a parallel transition.\\
The torsion tensor $T$ of $\nabla$ is the mapping $T: \chi(M)\times \chi(M)\rightarrow\chi(M)$ defined by $T(X, Y)=\nabla_X Y -\nabla_Y X-[X, Y]$. A connection $\nabla$ with $T=0$ is said to be torsion free or symmetric. \\
Two vectors $v, w$ in $T_p M$ are orthogonal if $g(v, w)=0$. A given vector $v\in T_p M$ is said to be a unit vector if $\vert g(v, v)\vert=1$. Thus an orthonormal basis $\{e_1, e_2, ..., e_n\}$ of $T_p M$ satisfies $\mid g(e_i, e_j)\vert=\delta_i^j$.
Let $(M,g)$ be an n-dimensional manifold $M$ with a semi-Riemannian metric $g$ of arbitrary signature $(-,..., -, +, ..., +)$. There exists a unique connection $\nabla$ on $M$ such that \\
$i)$ $Z(g(X,Y))=g(\nabla_Z X, Y)+g(X, \nabla_Z Y)$
and\\
$ii)$ $\nabla_X Y-\nabla_Y X=[X, Y]$ for all $X, Y, Z \in\chi(M)$. This connection is called the Levi-Civita connection of $(M,g)$. The condition $(i)$ means that the connection $\nabla$ is compatible with the metric $g$ and condition $(ii)$ means that $\nabla$ is torsion free. By replacing $Z=\gamma '$ in $(i)$, we find a unique parallel translation of vector fields along a given smooth curve $\gamma$ of $M$ which preserves $g$ \cite{n7}.\\
As stated in \cite{n1} we use parallel translation to define a distance function $d$ on the subspaces of the tangent spaces. Let $\gamma : (-\epsilon, \epsilon)\rightarrow M$ be a smooth curve passing through $p$. Then
$$d(u, B_E)= min\{\mid g_{\gamma(\zeta )}(P_{\zeta -t}(u) -w, P_{\zeta -t}(u) -w) \mid : w \in B_E\},$$
where $u\in T_{\gamma(t)} M$, $E$ is a subspace of $T_{\gamma(\zeta )} M$ with the basis $B_E$ and $t, \zeta \in (-\epsilon, \epsilon)$. For two given subspaces $E$ and $F$ of $T_{\gamma(\zeta )} M$ and $T_{\gamma(t)} M$ with the basis $B_E$ and $B_F$ we define the distance function $d(B_E, B_F)$ by
\begin{equation}\label{e1}
d(B_E, B_F)=max\{ max\{d(v, B_F) : v\in B_E \} , max\{ d(u, B_E) : u\in B_F \} \}
\end{equation}

 \section{Lorentzian Anosov family}
In this section $M$ is the disjoint union of the Hausdorff Lorentzian manifolds $\{M_i\}_{i\in\mathbb{Z}}$, with Lorentzian metrics $\{g_i\}$.
We can define the Lorentzian metric $g=\langle . , .\rangle$ on $M$ by $\langle . , .\rangle\vert_{M_i} =\langle . , .\rangle_i$ for each $i\in \mathbb{Z}$.
\begin{definition}\label{d1}
A non-stationary dynamical system (or $nsds$) $(M, \langle . , .\rangle, F)$ is a mapping $F : M\rightarrow M$ such that, for each $i\in \mathbb{Z}$, $F\vert _{M_i} = f_i : M_i\rightarrow M_{i+1}$ is a $C^1$-diffeomorphism. We use the notation $F=(f_i)_{i\in \mathbb{Z}}$. The $n$-th composition of $F$ on $M_i$ is defined by
\begin{equation*}
F_i^n=\left\{
\begin{array}{rl}
& f_{i+n-1} \circ \cdots \circ f_i : M_i \rightarrow M_{i+n} ~~~\text{if } ~~~n>0\\
& f_{i+n}^{-1} \circ \cdots \circ f_{i-1} ^{-1}: M_i \rightarrow M_{i+n} ~~~\text{if } ~~~~~n<0\\
& I_i : M_i\longrightarrow M_{i}~~~~~~~~~~~~~~~~~~~~~~~~~~~~~~~ \text{if } ~~~~~~~~n=0,
\end{array} \right.
\end{equation*}
\end{definition}
\noindent where $I_i : M_i\rightarrow M_i$ is the identity on $M_i$. In this definition $f_i$ may not be an isometry and the Lorentzian metric vary on each $M_i$. We also use of the name $L$-family for the non-stationary dynamical system (or $nsds$) $(M, \langle . , .\rangle, F)$.
\begin{definition}\label{d12}
An $L$-family $(M, \langle . , .\rangle, F)$ is called a Lorentzian Anosov family ($LA$-family) up to a distribution $p\mapsto E^n(p)$, if there exist constants $0<\lambda <1$, $c>0$ and a continuous splitting $T_p M= E^s(p)\oplus E^u(p)\oplus E^n(p)$ for each $p\in M$ such that\\
$i)$ Each vector of $E^n(p)$ is a null vector and each non-zero vector in $E^s(p)$ or $E^u(p)$ is spacelike or timelike:\\
$ii)$ The splitting $T_p M=E^s(p)\oplus E^u(p)\oplus E^n(p)$ up to the sequence of distribution $p\mapsto E^n(p)$ is $D F$-invariant, i.e., for each $p\in M$, $D_p F(E^s_p)=E^s_{F(p)}$ and $D_p F(E^u_p)=E^u_{F(p)}$, where $T_p M$ is the tangent space at $p$;\\
$iii)$ For each $i\in \mathbb{Z}$, $n\in\mathbb{N}$ and $p\in M_i$, we have:\\
\begin{equation*}
\vert g_{F^n_i(p)} (D_p F^n_i(v), D_p F^n_i(v))\vert \leqslant c \lambda^n \vert g_p(v,v)\vert~~~~~~~~~~~~~~~~~~~~~~~~~~if~~~~~ v\in E^s(p)
\end{equation*}
and
\begin{equation*}
\vert g_{F^{-n}_i(p)} (D_p F^{-n}_i(v), D_p F^{-n}_i(v))\vert \leqslant c \lambda^n \vert g_p(v,v)\vert~~~~~if~~~~~ v\in E^u(p)
\end{equation*}
$(iv)$ For each $\xi\in E^u_p$ and for each $\nu\in T_p M$ with the property
$$\vert g_{F^{-n}_i(p)} (D_p F^{-n}_i(\nu), D_p F^{-n}_i(\nu))\vert \leqslant c \lambda^n \vert g_p(\nu,\nu)\vert$$ we have $\lim_{n\rightarrow\infty} g_{F_i^{-n}(p)} (D_p F^{-n}_i(\xi), D_p F^{-n}_i(\nu)) =0$.
\end{definition}
\noindent The subspaces $E^s(p)$ and $E^u(p)$ are called stable and unstable subspaces respectively.
\begin{lemma}\label{l1}
The statement
$\vert g_{F^n_i(p)} (D_p F^n_i(v), D_p F^n_i(v))\vert \leqslant c \lambda^n \vert g_p(v,v)\vert$ for each $i\in\mathbb{Z}$ and $n\in\mathbb{N}$ and for $v\in E^s_p$ is equivalent to $\vert g_{F^{-n}_i(p)} (D_p F^{-n}_i(v), D_p F^{-n}_i(v))\vert \geqslant c^{-1} \lambda^{-n} \vert g_p(v,v)\vert$ and the same is true for the condition on the unstable subspace.
\end{lemma}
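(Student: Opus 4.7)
The plan is to exploit the $DF$-invariance of the splitting (condition (ii) of Definition \ref{d12}) together with the chain rule, turning the statement about forward iterates into one about backward iterates by an appropriate change of base point.

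Fix $p\in M_i$, $n\in\mathbb{N}$, $v\in E^s_p$, and set $q:=F^{-n}_i(p)\in M_{i-n}$ and $w:=D_pF^{-n}_i(v)\in T_qM$. The key observation is that, because $F^{-n}_i=(F^n_{i-n})^{-1}$ and the stable distribution is $DF$-invariant, we have $w\in E^s_q$. Moreover $F^n_{i-n}\circ F^{-n}_i$ is the identity on $M_i$, so the chain rule yields
\[
D_qF^n_{i-n}(w)=D_qF^n_{i-n}\bigl(D_pF^{-n}_i(v)\bigr)=v,\qquad F^n_{i-n}(q)=p.
\]
Applying the hypothesized forward contraction at the base point $q$ to the stable vector $w$ gives
\[
|g_p(v,v)|=\bigl|g_{F^n_{i-n}(q)}(D_qF^n_{i-n}(w),D_qF^n_{i-n}(w))\bigr|\leqslant c\lambda^n|g_q(w,w)|,
\]
which, upon solving for $|g_q(w,w)|$, is precisely the desired lower bound
\[
|g_{F^{-n}_i(p)}(D_pF^{-n}_i(v),D_pF^{-n}_i(v))|\geqslant c^{-1}\lambda^{-n}|g_p(v,v)|.
\]

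For the converse direction I would run the same argument in reverse: starting from the lower bound, I apply it at the base point $F^n_i(p)$ to the vector $D_pF^n_i(v)$ (which lies in $E^s_{F^n_i(p)}$ by invariance), and use $F^{-n}_{i+n}\circ F^n_i=I_i$ together with the chain rule to recover $v$. The statement for the unstable subspace is completely symmetric: one swaps the roles of $F^n_i$ and $F^{-n}_i$ throughout, using that $E^u$ is $DF$-invariant.

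I do not anticipate a genuine obstacle here; the only subtlety is bookkeeping the index shifts so that the ``identity composition'' $F^n_{i-n}\circ F^{-n}_i=I_i$ is applied at the correct manifold, and invoking invariance (ii) to guarantee that the pushed-forward vector remains in the appropriate stable (or unstable) subspace before the inequality is applied.
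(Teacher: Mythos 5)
Your proof is correct and follows essentially the same route as the paper's: both rest on the observation that the forward contraction estimate at one base point, read through the inverse map via the chain rule and the $DF$-invariance of $E^s$, becomes the backward expansion estimate at the image point. Your version merely reverses the roles of $p$ and its image under $F^{\pm n}_i$, which lands directly on the stated inequality at $p$ instead of requiring the paper's final relabeling step (``since $D_pF_i$ is an isomorphism the inequality holds for each element of $E^s_p$'').
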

 \begin{proof}
Let $w=D_p F_i^n(v)$ where $w\in E^s_{F_i^n(p)}$. Then
\begin{align*}
\vert g_{{F^n_i}(p)} (w, w)\vert
&=\vert g_{F^n_i(p)} (D_p F^n_i(v), D_p F^n_i(v))\vert\\
& \leqslant c \lambda^n \vert g_p(v,v)\vert \\
&=c \lambda^n \vert g_p(D_{F^n_i(p)} F^{-n}_i (w), D_{F^n_i(p)} F^{-n}_i (w))\vert
\end{align*}
If we put $F^n_i(p)=q$,
then $\vert g_{F^{-n}_i(q)} (D_{q} F^{-n}_i(w), D_{q} F^{-n}_i(w))\vert \geqslant c^{-1} \lambda^{-n} \vert g_q(w,w)\vert$. Since $D_p F_i$ is an isomorphism then the inequality is true for each element of $E^s_p$.
\end{proof}
 \begin{remark}\label{r1}
Base on the previous lemma the condition $(iii)$ of definition \ref{d12} can be replaced by:\\
$iii')$ $\vert g_{F^n_i(p)} (D_p F^n_i(v), D_p F^n_i(v))\vert \leqslant c \lambda^n \vert g_p(v,v)\vert ~~if~~ v\in E^s(p)$ and\\
$\vert g_{F^{n}_i(p)} (D_p F^{n}_i(v), D_p F^{n}_i(v))\vert \geqslant c^{-1} \lambda^{-n} \vert g_p(v,v)\vert~~if~~ v\in E^u(p)$\\
$iii'')$ $\vert g_{F^{-n}_i(p)} (D_p F^{-n}_i(v), D_p F^{-n}_i(v))\vert \geqslant c^{-1} \lambda^{-n} \vert g_p(v,v)\vert ~~if~~ v\in E^s(p)$ and\\
$\vert g_{F^{-n}_i(p)} (D_p F^{-n}_i(v), D_p F^{-n}_i(v))\vert \leqslant c \lambda^n \vert g_p(v,v)\vert~~if~~ v\in E^u(p)$
\end{remark}
\begin{proposition}\label{p1}
Given an $LA$-family $(M,F)$ up to a sequence of distributions $p\mapsto E^n(p)$. Then for each $p\in M$, the tangent space of $M$ at $p$ has a unique splitting.
\end{proposition}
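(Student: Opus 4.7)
The plan is to assume there exist two splittings of $T_pM$, namely $T_pM = E^s(p)\oplus E^u(p)\oplus E^n(p) = \tilde E^s(p)\oplus \tilde E^u(p)\oplus \tilde E^n(p)$, both satisfying the conditions of Definition~\ref{d12}, and to deduce $E^\sigma(p)=\tilde E^\sigma(p)$ for $\sigma\in\{s,u,n\}$. First I would pick an arbitrary nonzero $v\in \tilde E^s(p)$ and, using the first splitting, decompose $v=v_s+v_u+v_n$ with $v_s\in E^s(p)$, $v_u\in E^u(p)$, $v_n\in E^n(p)$. The target is to show $v_u=0$ and $v_n=0$, so that $v\in E^s(p)$; combining with the reverse inclusion by a symmetric argument gives $E^s(p)=\tilde E^s(p)$. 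The analogous reasoning for unstable vectors (using the backward-time version of condition (iii) via Lemma~\ref{l1}) yields $E^u(p)=\tilde E^u(p)$, and then $E^n(p)=\tilde E^n(p)$ follows by comparing direct-sum decompositions.

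To see $v_u=0$, I would forward iterate by $F^n_i$. From condition (iii) applied to $v\in \tilde E^s(p)$, the quantity $|g_{F^n_i(p)}(D_pF^n_i v, D_pF^n_i v)|\leq c\lambda^n |g_p(v,v)|$ tends to zero. Expanding the bilinear form on the left and applying Lemma~\ref{l1} together with Remark~\ref{r1} to the first-splitting components, one finds that $|g(D_pF^n_i v_s, D_pF^n_i v_s)|$ decays like $\lambda^n$, $g(D_pF^n_i v_n, D_pF^n_i v_n)=0$, while $|g(D_pF^n_i v_u, D_pF^n_i v_u)|$ grows like $\lambda^{-n}$ whenever $v_u\neq 0$. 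The expanding self-term on $v_u$ can only be cancelled by the cross terms, and condition (iv) of Definition~\ref{d12} --- together with its forward counterpart obtained via Lemma~\ref{l1} --- provides the asymptotic $g$-orthogonality needed to preclude such cancellation. This forces $v_u=0$.

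To show $v_n=0$ once $v=v_s+v_n$ is established, I would exploit the fact that $\tilde E^s(p)$ contains no null vectors (condition (i) for the tilde splitting), whereas $E^n(p)$ consists only of null vectors, in combination with $DF$-invariance and the continuity of both splittings in the distance function \eqref{e1}. Concretely, if $v_n$ were nonzero then iterating and passing to the limit along the orbit, continuity of the splitting would produce a null direction inside $\tilde E^s(F^n_i(p))$, violating condition (i) at that point.

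The main obstacle is the second step: in Lorentzian geometry subspaces are not generically $g$-orthogonal, and the presence of null directions means that bilinear-form estimates admit delicate cancellations not present in the Riemannian Anosov setting of \cite{n5}. Condition (iv) of Definition~\ref{d12} --- asserting asymptotic $g$-orthogonality between unstable vectors and anything exhibiting the backward contraction rate $c\lambda^n$ --- is precisely the structural ingredient that rules out pathological cancellation in the dominant-term argument and makes the uniqueness of the splitting go through.
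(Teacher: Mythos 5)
Your overall strategy (a dominant-term argument in which condition (iv) kills the cross terms) is in the spirit of the paper, but the specific route you choose runs into two concrete obstructions that the paper's route is designed to avoid. First, your key step for $v_u=0$ iterates \emph{forward} from a vector $v\in\tilde E^s(p)$ and needs control of the cross terms $g_{F^n_i(p)}\bigl(D_pF^n_i v_u, D_pF^n_i v_s\bigr)$ and $g_{F^n_i(p)}\bigl(D_pF^n_i v_u, D_pF^n_i v_n\bigr)$ as $n\to+\infty$. Condition (iv) is a statement about \emph{backward} iterates of a vector $\xi\in E^u_p$ paired with a vector $\nu$ that contracts backward; Lemma \ref{l1} only converts inequalities for the quadratic form $|g(Dv,Dv)|$ of a single vector between forward and backward time and says nothing about cross terms $g(Du,Dw)$ with $u\neq w$. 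So the ``forward counterpart of (iv) obtained via Lemma \ref{l1}'' that your argument hinges on does not exist in the axioms and is not derivable from them. Second, your three-way decomposition $v=v_s+v_u+v_n$ drags in the null component, but condition (ii) only makes $E^s$ and $E^u$ invariant under $DF$ --- the distribution $E^n$ is not assumed invariant --- so $D_pF^n_i v_n$ need not be null, your claim $g(D_pF^n_i v_n, D_pF^n_i v_n)=0$ is unjustified, and the cross terms involving $v_n$ are likewise uncontrolled. (Your final step, deducing $v_n=0$ from continuity of the splitting, also leans on Theorem \ref{t1}, which is proved \emph{after} this proposition.)

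The paper sidesteps both problems. Uniqueness is asserted only ``up to'' the given distribution $p\mapsto E^n(p)$, so the null summand is the same in both splittings and never needs to be iterated; asserting $E^s(p)\oplus E^u(p)=\tilde E^s(p)\oplus\tilde E^u(p)$, the paper reduces to $E^u=\tilde E^u$, writes an unstable vector as $\xi=\nu+\omega$ with $\nu\in\tilde E^u(p)$ and $\omega\in\tilde E^s(p)$ (only two summands, no null part), and iterates \emph{backward}: both $\xi$ and $\nu$ contract under $F^{-n}_i$, so the single cross term $g_{F^{-n}_i(p)}(D_pF^{-n}_i\xi, D_pF^{-n}_i\nu)$ is exactly the one that condition (iv), as literally stated, annihilates. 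Lemma \ref{l1} then forces $c^{-1}\lambda^{-n}|g_p(\omega,\omega)|$ to stay bounded, hence $g_p(\omega,\omega)=0$, and condition (i) gives $\omega=0$. If you want to salvage your plan, start from $\xi\in E^u$ and go backward exactly as above; as written, the forward-time argument cannot be closed with the stated hypotheses.
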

\begin{proof}
Since the splitting is invariant then it is determined by the splitting on each component, so we restrict our proof on $M_0$. Let $p\in M_0$ be given and let $T_p M_0=E^s(p)\oplus E^u(p)\oplus E^n(p)=\tilde{E}^s(p)\oplus\tilde{E}^u(p)\oplus E^n(p)$ up to the distribution $p\mapsto {E}^n(p)$. Since $E^s(p)\oplus E^u(p) =\tilde{E}^s(p)\oplus\tilde{E}^u(p)$ then it is enough to prove that $E^u=\tilde{E}^u$. If $\xi\in E^u(p)$, then $\xi=\nu+\omega$, where $\nu\in \tilde{E}^u(p)$ and $\omega\in \tilde{E}^s(p)$. We show that the vector $\omega$ is a null vector. By lemma \ref {l1} we have
\begin{align*}
c^{-1} \lambda^{-n} \vert g_p(\omega,\omega)\vert &\leqslant \vert g_{F^{-n}_i(p)} (D_p F^{-n}_i(\omega), D_p F^{-n}_i(\omega))\vert\\
&= \vert g_{F^{-n}_i(p)} (D_p F^{-n}_i(\xi-\nu), D_p F^{-n}_i(\xi-\nu))\vert\\
&=\vert g_{F^{-n}_i(p)} (D_p F^{-n}_i(\xi ), D_p F^{-n}_i(\xi))\\
&+g_{F^{-n}_i(p)} (D_p F^{-n}_i(\nu), D_p F^{-n}_i(\nu))\\
& -2 g_{F^{-n}_i(p)} (D_p F^{-n}_i(\xi), D_p F^{-n}_i(\nu))\vert\\
&\leqslant c \lambda^n ( \vert g_p(\xi, \xi) \vert +\vert g_p(\nu, \nu)\vert)\\
&+2\vert g_{F^{-n}_i(p)} (D_p F^{-n}_i(\xi), D_p F^{-n}_i(\nu))\vert
\end{align*}
Since $0<\lambda<1$, then the above inequality tends to zero as $n$ tends to infinity. Hence the vector $\omega$ is a null vector. So $E^u(p)\subseteq \tilde{E}^u(p)$. We can show by similar calculation $ \tilde{E}^u(p)\subseteq E^u(p)$, therefore $E^u(p)=\tilde{E}^u(p)$ and the splitting is unique.
\end{proof}
 \begin{theorem}\label{t1}
Let $(M,F)$ be an $LA$-family up to a $d$ dimensional distribution $p\mapsto E^n(p)$. Let $\gamma$ be a curve with $\gamma (t_n)\in M$ such that $t_n\rightarrow 0$ and $\gamma (t_n)\rightarrow p\in M$ when $n\rightarrow \infty$, then for a subsequence $\{\zeta _n\}$ of $\{t_n\}$, which we call it again $\{t_n\}$, we have
$$E^u_{(\gamma(t_n))}\rightarrow E^u(p) ~~\text{when}~~~ n\rightarrow\infty,$$
and
$$E^s_{(\gamma(t_n))}\rightarrow E^s(p)~~ \text{when}~~~ n\rightarrow\infty.$$
\end{theorem}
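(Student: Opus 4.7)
The plan is to reduce to a single component, produce a subsequence along which the pulled-back $E^u_{\gamma(t_n)}$ converge in the Grassmannian, and then identify the limit with $E^u(p)$ using the backward-contraction characterization of unstable vectors from Lemma \ref{l1} together with condition $(iv)$ of Definition \ref{d12}. By the $DF$-invariance of the splitting and Proposition \ref{p1}, it suffices to fix $p\in M_0$ and argue entirely inside $T_p M$; set $k_u:=\dim E^u(p)$.

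First I would parallel-transport each $E^u_{\gamma(t_n)}$ back to $T_p M$ along $\gamma|_{[0,t_n]}$, obtaining $\widetilde E^u_n := P_{-t_n}(E^u_{\gamma(t_n)})\subset T_p M$. Since the Grassmannian of $k_u$-planes in $T_p M$ is compact, I pass to a subsequence (relabeled $\{t_n\}$) with $\widetilde E^u_n \to \overline E^u$, and pick a basis $\{w_1,\ldots,w_{k_u}\}$ of $\overline E^u$ with approximating bases $\{w_1^n,\ldots,w_{k_u}^n\}$ of $\widetilde E^u_n$ such that $w_j^n\to w_j$ in $T_p M$. The vectors $v_j^n := P_{t_n}(w_j^n)\in E^u_{\gamma(t_n)}$ are non-null by condition $(i)$, and after normalization may be chosen with $|g_{\gamma(t_n)}(v_j^n,v_j^n)|=1$; since parallel transport preserves $g$, this forces $|g_p(w_j,w_j)|=1$, so each $w_j$ is non-null.

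The key step is to show $w_j\in E^u(p)$. For any fixed $m\in\mathbb{N}$, Lemma \ref{l1} applied at $\gamma(t_n)$ gives
\[
|g_{F^{-m}(\gamma(t_n))}(DF^{-m}(v_j^n), DF^{-m}(v_j^n))| \leq c\lambda^m |g_{\gamma(t_n)}(v_j^n,v_j^n)|.
\]
Letting $n\to\infty$ with $m$ fixed and using continuity of $F^{-m}$, $DF^{-m}$, and $g$, together with the parallel-transport identification $P_{-t_n}(v_j^n)=w_j^n\to w_j$ inside $T_p M$, yields
\[
|g_{F^{-m}(p)}(DF^{-m}(w_j), DF^{-m}(w_j))| \leq c\lambda^m |g_p(w_j,w_j)|.
\]
Decomposing $w_j = \alpha^s + \alpha^u + \alpha^n$ in the true splitting at $p$, the backward growth of stable vectors (Lemma \ref{l1}) would force the right-hand side to be overwhelmed unless $\alpha^s=0$; condition $(iv)$ controls the asymptotic cross terms between $\alpha^u$ and any null-direction obstructions, and since $w_j$ is non-null this gives $w_j\in E^u(p)$. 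Combined with the dimension count $\dim\overline E^u = k_u = \dim E^u(p)$, this forces $\overline E^u = E^u(p)$. A symmetric argument using forward iteration and condition $(iii)$ directly then gives $E^s_{\gamma(t_n)}\to E^s(p)$.

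The main obstacle I expect is precisely the limit transition in the displayed inequality above: the vectors $v_j^n$ live in distinct tangent spaces $T_{\gamma(t_n)}M$, so both sides of the estimate must be compared via the parallel-transport identification, which is exactly what the distance $d$ of \eqref{e1} is designed to encode. Compatibility of the Levi--Civita connection with $g$ (property $(i)$ of the unique torsion-free connection) is what allows passing the isometric identification through both $g$ and $DF^{-m}$ in the limit. A secondary technical point is the pseudo-Riemannian normalization: one needs to know \emph{a priori} that the limit $w_j$ is non-null before applying the contraction characterization, which is why the condition $(i)$ normalization must be established before the limit is taken rather than after.
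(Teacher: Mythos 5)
Your overall strategy is the paper's: extract a subsequence along which the unstable spaces converge after parallel transport into $T_pM$, show that any limit vector satisfies the backward-contraction estimate, decompose it in the true splitting at $p$, and kill the stable component using Lemma \ref{l1} together with condition $(iv)$ of Definition \ref{d12} to control the cross term; comparing spaces at different base points via the $g$-preserving parallel transport is likewise exactly how the paper sets up the distance \eqref{e1}. Your use of compactness of the Grassmannian to produce the convergent subsequence is in fact cleaner than the paper's device (the paper parallel-transports a fixed orthonormal basis of $E^u(\gamma(\zeta_1))$ and asserts the result is an orthonormal basis of each $E^u(\gamma(\zeta_n))$, which is not justified as stated).

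There is, however, one genuine gap. You work from the outset in the Grassmannian of $k_u$-planes with $k_u=\dim E^u(p)$, i.e.\ you assume a priori that $\dim E^u_{\gamma(t_n)}=\dim E^u(p)$, and your closing step (``$\overline{E}^u\subseteq E^u(p)$ plus $\dim\overline{E}^u=k_u$ forces equality'') rests entirely on that assumption. Equality of dimensions is part of what the theorem is establishing; without it your argument only yields $\overline{E}^u\subseteq E^u(p)$, possibly strictly, and then $E^u_{\gamma(t_n)}\rightarrow E^u(p)$ fails in one direction of the distance \eqref{e1}. The paper repairs this by first passing (pigeonhole) to a subsequence on which $\dim E^u_{\gamma(\zeta_n)}$ is a constant $l$, running your argument to conclude $\dim E^u(p)\geqslant l$, running the symmetric argument to get $\dim E^s(p)\geqslant m-l-d$, and then using $\dim E^u(p)+\dim E^s(p)=m-d$ to force both inequalities to be equalities. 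You need this paired count (or an explicit appeal to the continuity already posited in Definition \ref{d12}, which would make the dimension locally constant but rather begs the question the theorem is answering). A smaller, shared weakness: both you and the paper are vague about why the limit vector has no $E^n(p)$-component; your observation that $w_j$ is non-null does not by itself exclude $w_j=\alpha^u+\alpha^n$ with $\alpha^n\neq 0$.
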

\begin{proof}
We take a point $p=(i,m_i)\in M$ with $m_i\in M_i$ and the map $F : M\rightarrow M$ with $F(i,m_i)=(i+1, f_i(m_i))$, $f_i(m_i)\in M_{i+1}$ and a curve $\gamma(t_n)$ on $M$ such that $t_n\rightarrow 0$ when $n\rightarrow\infty$. In fact $\gamma (t_n)=(i, \gamma_i(t_n))$ where $\gamma_i(t_n)\in M_i$. First we prove $d(B_{E^u(\gamma(t_n))} , B_{E^u(p)})\rightarrow 0$ when $n\rightarrow\infty$ where $B_{E^u(\gamma(t_n))}$ is a basis for $E^u_{(\gamma(t_n))}$ and $B_{E^u(p)}$ is a basis for $E^u(p)$. Second we show the convergence of the vectors in $E^u_{(\gamma(t_n))}$ to the vectors in $E^u(p)$.\\
Let $m$ be the dimensional of $M$. Then $0\leqslant dim (E^u(\gamma(t_n)))\leqslant m$ for all $n\in \mathbb{N}$. There exsit a constant $l$ and a subsequence $\{\zeta _n\in [-\epsilon/2, \epsilon/2] : n\in \mathbb{N}\}$ such that dim $(E^u(\gamma(\zeta _n))=l$ for all $n\in\mathbb{N}$. We take an orthonormal basis $B_{E^u(\gamma(\zeta _1))}=\{v_{11}, ... , v_{1l}\}$ and we translate it parallelly via linear isomorphism $P_t$ to $B_{E^u(\gamma(\zeta _n))}$ by $\{ v_{n1}=P_{\zeta _n-\zeta _1}(v_{11}), v_{n2}=P_{\zeta _n-\zeta _1}(v_{12}), ... , v_{nl}=P_{\zeta _n-\zeta _1}(v_{1l})\}$ that is an orthonormal basis for $E^u(\gamma(\zeta _n))$. The sequence $\{v_{nj}\}$ is a convergent sequence in the tangent bundle $TM$. This limit is $v_{j}=lim_{n\rightarrow\infty}v_{nj}= lim_{n\rightarrow\infty} P_{\zeta _n- \zeta _1}(v_{1j})=P_{lim_{n\rightarrow \infty}\zeta _n- \zeta_1}(v_{1j})$. Obviously $v_j\in E^s(p)\oplus E^u(p)$, thus $v_j=u+w$ where $u\in E^s(p)$ and $w\in E^u(p)$. By using of Lemma \ref{l1} we prove $u=0$. In fact we have\\
$c^{-1} \lambda^{-n} \vert g_p(u,u)\vert$\\
$\leqslant \vert g_{F^{-n}_i(p)} (D_p F^{-n}_i(u), D_p F^{-n}_i(u))\vert$ \\
$ =\vert g_{F^{-n}_i(p)} (D_p F^{-n}_i(u+w-w), D_p F^{-n}_i(u+w-w))\vert $\\
$\leqslant \vert g_{F^{-n}_i(p)} (D_p F^{-n}_i(v_j), D_p F^{-n}_i(v_j))\vert $\\
$ + \vert g_{F^{-n}_i(p)} (D_p F^{-n}_i(w), D_p F^{-n}_i(w))\vert $\\
$+2 \vert g_{F^{-n}_i(p)} (D_p F^{-n}_i(v_j), D_p F^{-n}_i (w)) \vert$\\
$\leqslant lim_{t_r\rightarrow 0} \mid g_{F^{-n}_i(p)} (D_p F^{-n}_i(v_{rj}), D_p F^{-n}_i(v_{rj})) \mid $\\
$+\mid g_{F^{-n}_i(p)} (D_p F^{-n}_i(w), D_p F^{-n}_i(w))\mid$\\
$+ 2 ~lim_{t_r\rightarrow 0}\mid g_{F^{-n}_i(p)} (D_p F^{-n}_i(v_{rj}), D_p F^{-n}_i(P_{t_r}w))\mid$\\
$ \leqslant lim_{t_r\rightarrow 0} c \lambda ^n \vert g_{F^{-n}_i(p)} (v_{rj}, v_{rj})\vert+ c \lambda^n \mid g_{F^{-n}_i(p)} (w, w) \mid$ \\ $+2~ lim_{t_r\rightarrow 0} \vert g_{F^{-n}_i(p)} ( D_p F^{-n}(v_{rj}), D_p F^{-n} (P_{t_r}w)\vert$. \\
Base on Definition \ref{d12} $lim_{t_r\rightarrow 0} \vert g_{F^{-n}_i(p)} ( D_p F^{-n}(v_{rj}), D_p F^{-n} (P_{t_r}w)\vert=0$. Thus $\vert g_p(u, u)\vert=0$. Hence $u=0$ or $u$ is a null vector. Since $u\in E^s(p)$ then $u=0$. Therefore the set $\{v_1, ..., v_l\}$ is an orthonormal subset of $E^u(p)$. Thus dim $(E^u(p))\geqslant l$. By the same calculations dim $(E^s(p))\geqslant m-l-d$ where the dimension of $E^n(p)$ is $d$. Hence dim $(E^u(p))=l$ and dim $(E^s(p))= m-l-d$. Thus with choosing sufficiently large $n$, dim $(E^u(\gamma (t_n))=l$ and dim $(E^s(\gamma (t_n))=m-l-d$. For the second part we consider an arbitrary unit vector $u_k=\sum_{j=1}^l \alpha_j v_{kj}\in E^u(\gamma(t_k))$, we have $\sum_{j=1}^l \alpha_j^2=1$. Since $v_{kj}\rightarrow v_j$ then $\vert g_p(\sum_{j=1}^l \alpha_j v_{j} , \sum_{j=1}^l \alpha_j v_{j}) \vert=1$ and $lim_{{t_k}\rightarrow 0} \vert g_p(P_{-t_k}(v_{kj}) -v_j, P_{-t_k}(v_{kj'}) -v_{j'} ) \vert =0$. Last equality implies for given $\delta>0$ there is $M>0$ such that for all $k>M$ and $j, j'\in \{1, ... , l\}$ we have
$$\vert g_p(P_{-t_k}(v_{kj}) -v_j, P_{-t_k}(v_{kj'}) -v_{j'} ) \vert < \delta / (1+\mid \sum_{j=1}^l \sum_{j'=1}^l \alpha_j \alpha_j' \mid).$$
Therefore\\
$\mid g_p (P_{-t_k}(u_k)-\sum_{j=1}^l \alpha _j v_j , P_{-t_k}(u_k)-\sum_{j=1}^l \alpha _j v_j ) \mid$\\
$=\vert g_p (\sum_{j=1}^l \alpha _j P_{-t_k}(v_{k j})-\sum_{j=1}^l \alpha _j v_j , \sum_{j=1}^l \alpha _j P_{-t_k}(v_{k j})-\sum_{j=1}^l \alpha _j v_j ) \mid $\\
$=\vert \sum_{j=1}^l \sum_{j'=1}^l \alpha_j \alpha_{j'} g_p (P_{-t_k}(v_{kj})-v_j, P_{-t_k}(v_{kj'})-v_{j'} )\vert $\\
$ \leqslant \delta \mid \sum_{j=1}^l \sum_{j'=1}^l \alpha_j \alpha_{j'} \mid / (1+\mid \sum_{j=1}^l \sum_{j'=1}^l \alpha_j \alpha_{j'} \mid)\leqslant \delta~~~~~~(*)$

Now we take an arbitrary unit vector $v=\sum_{j=1}^l \alpha_j v_j \in E^u(p)$. Since $v_{nj}\rightarrow v_j$ then $\vert g_{\gamma(t_n)} (\sum_{j=1} ^ l\alpha_j v_{nj} , \sum_{j=1} ^ l\alpha_j v_{nj} )\vert=1$. Thus for given $\delta>0$ we have
$$\vert g_{\gamma(t_n)}(P_{t_n}(v_{j}) -v_{nj}, P_{t_n}(v_{j'}) -v_{nj'} ) \vert < \delta / (1+\mid \sum_{j=1}^l \sum_{j'=1}^l \alpha_j \alpha_{j'} \mid).$$
Consequently\\
$\mid g_{\gamma(t_n)} (P_{t_n}(v)-\sum_{j=1}^l \alpha _j v_{nj} , P_{t_n}(v)-\sum_{j=1}^l \alpha _j v_{nj} ) \mid$\\
$=\vert g_{\gamma(t_n)} (\sum_{j=1}^l \alpha _j P_{t_n}(v_{j})-\sum_{j=1}^l \alpha _j v_{nj} , \sum_{j=1}^l \alpha _j P_{t_n}(v_{j})-\sum_{j=1}^l \alpha _j v_{nj}) \mid $\\
$=\vert \sum_{j=1}^l \sum_{j'=1}^l \alpha_j \alpha_{j'} g_{\gamma(t_n)} (P_{t_n}(v_{j})-v_{nj}, P_{t_n}(v_{j'})-v_{nj'}) \vert$\\
$\leqslant \delta \mid \sum_{j=1}^l \sum_{j'=1}^l \alpha_j \alpha_{j'} \mid / (1+\mid \sum_{j=1}^l \sum_{j'=1}^l \alpha_j \alpha_{j'} \mid)\leqslant \delta~~~~~~(**)$
The inequalities $(*)$ and $(**)$ implies $E^u(\gamma(t_n))$ convergence to $E^u(p)$. The second part of Theorem can prove similarly.
\end{proof}
\section{Examples of Lorentzian non-stationary dynamical system}
Let $M_0$ be a smooth Lorentzian manifold with a metric $g_0$ which has the splitting $TM_0=E_0^s\oplus E_0^u\oplus E_0^n$ for each $p\in M_0$. We make an $LA$-family $(M, \langle . , .\rangle, F)$ by taking $M=\displaystyle\bigcup_{i\in\mathbb{Z}}^0 M_i$, where $M_i=\{i\}\times M_0$ and we define a Lorentzian metric on each $M_i$ by
$g_i\vert _{E^s}=\alpha ^ {-\vert i \vert} g_0 \vert _{E^s},~~~g_i\vert _{E^u}=\alpha ^ {\vert i\vert} g_0 \vert _{E^u},~~~ \text{and} ~~~g_i\vert _{E^n}=g_0 \vert _{E^n}$, where $i\neq 0$ and $\alpha>1$. In fact we contract $g_0$ exponentially along the subspace $E_0^s$ and we expand it along the subspace $E^u_0$. If we take $f_i : M_i\rightarrow M_{i+1}$ with the properties $Df_i(E^s)=E^s,~~~Df_i(E^u)=E^u~~~ \text{and}~~~ Df_i(E^n)=E^n$, then $(M, \langle . , .\rangle, F=\{f_i\})$ is an $LA$-family.\\ Now we present another example.
\begin{example}
Take the solid torus $N=S^1 \times D^2$ where $D^2$ is the unit disk in $\mathbb{R}^2$. The coordinates of this manifold are of the forms $(\theta, u, v)$ such that $ \theta\in S^1$ and $(u, v)\in D^2$, with $u^2+v^2\leqslant 1$. Using these coordinate the dynamical system $f: N\rightarrow N$ defined by
$$f(\theta, u , v)=(2\theta, \dfrac{1}{10}u +\dfrac{1}{10} cos \theta, \dfrac{1}{10}v +\dfrac{1}{10} sin \theta)$$
creates a solenoid or Smale attractor by the iterations of $f$. In fact it is the maximal invariant hyperbolic set $\Lambda:= N\bigcap f(N)\bigcap f^2(N) ... =\displaystyle\bigcap _{n\in\mathbb{N}} f^n(N)$ \cite{n12}.
If we restrict $f$ on $\Lambda$ then we have an Anosov map in the sense of Riemannian metric. We define a mapping $\tilde{f} : \Lambda \times \mathbb{R} \subset \mathbb{R}^4\rightarrow \Lambda \times \mathbb{R} \subset \mathbb{R}^4$ by $\tilde{f}(\theta, u , v , z)=(f(\theta, u , v) , z)$. On $\Lambda \times \mathbb{R}$ we take the induced metric of $\mathbb{R}^4$ defined by $g_p(X, Y)= \theta_1 \theta_2 +u_1 u_2 +v_1 v_2-z_1 z_2$ where $p\in\mathbb{R}^4$ and $X=(\theta_1, u_1, v_1, z_1), ~Y=(\theta_2, u_2, v_2, z_2) \in T_p \mathbb{R}^4$. If we take $E^n(p)=\{(a, a, a, \sqrt{3} a) : a\in\mathbb{R}\}$ then $\Lambda \times \mathbb{R}$ is a Lorentzian hyperbolic set for $\tilde{f}$ and it is a Lorentzian Anosov map. We define an $LA$-family $(M, F)$ by taking the components of $M$ i.e. $M_i$ as distinct copies of $\Lambda \times \mathbb{R}$ and the mapping $\tilde{f_i}: M_i\rightarrow M_{i+1}$ by $(i,x)\mapsto (i+1, \tilde{f}_i(x))$, for $i\in\mathbb{Z}$ (see figure $1$).

 \begin{figure}
\centering
\includegraphics[scale=0.8]{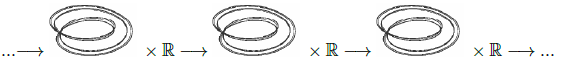}
\caption{The Anosov family when $\Lambda$ is approximated by $ N\bigcap f(N)\bigcap f^2(N)$.}
\end{figure}

 \end{example}
 \begin{example}
Let $(M, g)$ be a connected two dimensional $C^k$ Riemannian manifold. We take $M_i=I_i \times M$ where $I_i=(0, 2^i)$ for each $i\in\mathbb{Z}$.\\ By putting a Lorentzian metric $g_i (v_{1i}\oplus w_1, v_{2i}\oplus w_2)=-v_{1i} v_{2i}+ d^{\mid i\mid} g(w_1, w_2)$ where $0<d<1$, $v_{1i} \oplus w_2, ~v_{2i}\oplus w_2\in T_{(t,p)} (I\times M)$, each $M_i$ is a warped product manifold. We make an $LA$-family by defining $f_i : I_i\times M\rightarrow I_{i+1}\times M$, $(x, y)\mapsto (2x, h(y))$ where $h$ is a diffeomorphism on $M$. If we take \\
$E^n_{ip}=<(d^{\mid i \mid} \sqrt{g((0,1), (0,1)) },0 ,1)>$, $E^u_{ip}=<(1, 0, 0)>$ and $E^s_{ip}=<(0, 1, 1)>$, then $(M, \{f_i\})$ is an $LA$-family.
\end{example}

\section{Shadowing property on Lorentzian non-stationary dynamical system}
In this section we define Lorentzian shadowing property of type $I$ and type $II$ on $L$-family $(M, \langle . , .\rangle, F)$ and on the induced dynamic by it. We prove shadowing property of type $I$ is invariant by uniformly conjugacy and if $L$-family $(M, \langle . , .\rangle, F)$ has shadowing property of type $ II $ then it has shadowing property of type $ I $.\\

\begin{definition}\label{d1}
Consider a $L$-family $(M, \langle . , .\rangle, F)$ for $ \delta>0 $, the sequence $ \{v_i\}_{i\in \mathbb{Z}} $ where $v_i\in T_p M_i$ is said to be a $\delta$-pseudo orbit of type $I$ if $ \vert g_{i+1} (D_p f_i (v_i), v_{i+1})\vert < \delta $, for $i\in \mathbb{Z}$.
\end{definition}
For given $\epsilon>0$ a $ \delta $-pseudo orbit $ \{v_i\}_{i\in\mathbb{Z}} $ is called to be $ \epsilon $-traced by $ w\in T_p M_i $
if $ \vert g_{i+n}(F_i^n(w), v_{i+n})) \vert <\epsilon $ for $n\in \mathbb{Z}$.
\begin{definition}\label{d2}
A $L$-family $(M, \langle . , .\rangle, F)$ is said to have Lorentzian shadowing property of type $I$ if for every $\epsilon>0$, there exists a $ \delta>0 $ such that every $ \delta $-pseudo orbit is $ \epsilon $-traced by some vector in $ T_pM_i $.\
\end{definition}
Consider two $L$-family $(M, g, F)$ and $(\tilde{M}, \tilde{g} , \tilde{F})$. We say a homeomorphism $h : M\rightarrow \tilde{M}$ is uniform continuous if for given $ \epsilon >0 $ there exists an $\tilde{\epsilon} > 0$ such that $ \vert g(v,w)\vert <\epsilon $ implies $ \vert \tilde{g}( h_i(v), h_i(w)) \vert < \tilde{\epsilon}$ where $v, w \in T_pM_i$ and $i\in \mathbb{Z}$.
\begin{definition}\label{u1}
A topological conjugacy between $L$-family $(M, g, F)$ and $(\tilde{M}, \tilde{g}, \tilde{F})$ is a map $ h: M\rightarrow \tilde{M}$ such that for $i\in\mathbb{Z}$, $ h \vert _{M_i}=h_i : M_i\rightarrow \tilde{M}_i$ is a homeomorphism and $ h_{i+1} \circ f_i= \tilde{f}_i \circ h_i $.
\end{definition}
\begin{definition}
A topological conjugacy $h : M\rightarrow \tilde{M}$ is uniformly conjugate if $ h_i : M_i\rightarrow \tilde{M}_i$ and $ h_i^{-1} : \tilde{M}_i\rightarrow M_i$ are uniformly continuous.
\end{definition}

 \begin{theorem}\label{u2}
Let $L$-family $(M, g, F)$ be uniformly conjugate $(\tilde{M}, \tilde{g}, \tilde{F})$. Then $(M, g, F)$ has shadowing property of type $I$ if and only if $(\tilde{M}, \tilde{g}, \tilde{F})$ has shadowing property of type $I$.\\
\end{theorem}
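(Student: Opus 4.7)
Since the hypothesis is symmetric in the two families (the inverse of a uniform conjugacy is again a uniform conjugacy), I would only prove one direction and assert the other by interchanging the roles of $h$ and $h^{-1}$. So assume $(\tilde M,\tilde g,\tilde F)$ has shadowing of type $I$; the plan is to show $(M,g,F)$ does. Fix $\epsilon>0$ and chase through a quantifier chain $\epsilon\rightsquigarrow\tilde\epsilon\rightsquigarrow\tilde\delta\rightsquigarrow\delta$ that produces the required $\delta$.

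Concretely, I would first apply uniform continuity of $h^{-1}$ to turn the target accuracy $\epsilon$ into a smaller $\tilde\epsilon>0$ such that, whenever a pair of tangent vectors $x,y$ over $\tilde M$ satisfies $|\tilde g(x,y)|<\tilde\epsilon$, its pullback satisfies $|g(Dh_i^{-1}(x),Dh_i^{-1}(y))|<\epsilon$. Feeding this $\tilde\epsilon$ into the shadowing hypothesis on $(\tilde M,\tilde g,\tilde F)$ yields $\tilde\delta>0$. Finally, uniform continuity of $h$ converts $\tilde\delta$ into a $\delta>0$ such that $|g(x,y)|<\delta$ implies $|\tilde g(Dh_i(x),Dh_i(y))|<\tilde\delta$. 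This is the $\delta$ I claim works.

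The core computation is the transfer of pseudo-orbits and traces through $Dh$. Given a $\delta$-pseudo orbit $\{v_i\}$ in $M$, set $\tilde v_i:=Dh_i(v_i)$. Differentiating the conjugacy identity $h_{i+1}\circ f_i=\tilde f_i\circ h_i$ gives $D\tilde f_i\circ Dh_i=Dh_{i+1}\circ Df_i$, so
\[
|\tilde g_{i+1}(D\tilde f_i(\tilde v_i),\tilde v_{i+1})|=|\tilde g_{i+1}(Dh_{i+1}(Df_i(v_i)),Dh_{i+1}(v_{i+1}))|<\tilde\delta,
\]
by the choice of $\delta$, so $\{\tilde v_i\}$ is a $\tilde\delta$-pseudo orbit of $(\tilde M,\tilde g,\tilde F)$. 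Shadow it by some $\tilde w$, and let $w:=Dh_i^{-1}(\tilde w)$. Iterating the conjugacy relation gives $D\tilde F_i^n\circ Dh_i=Dh_{i+n}\circ DF_i^n$, so $|\tilde g_{i+n}(D\tilde F_i^n(\tilde w),\tilde v_{i+n})|<\tilde\epsilon$ translates, via the first uniform-continuity step, into $|g_{i+n}(DF_i^n(w),v_{i+n})|<\epsilon$ for every $n\in\mathbb Z$. This is exactly $\epsilon$-tracing of $\{v_i\}$ by $w$.

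The routine parts are the two uniform-continuity invocations and the bookkeeping between the pseudo-orbit inequality and the tracing inequality. The only genuine point that needs care is the use of the derivative on both sides: the paper's definition of uniform continuity is phrased on tangent vectors, and one must read $h_i$ in that definition as the induced map on tangent bundles (i.e.\ $Dh_i$) for the chain of estimates above to parse correctly. Once that identification is made, the conjugacy identity differentiated once (for the pseudo-orbit step) and iterated (for the tracing step) does all the work, and the reverse direction is obtained verbatim with $h$ and $h^{-1}$ swapped.
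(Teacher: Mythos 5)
Your proposal is correct and follows essentially the same route as the paper's proof: the identical quantifier chain (target $\epsilon$ through one uniform-continuity step, then the shadowing hypothesis, then the other uniform-continuity step to produce $\delta$), transfer of the pseudo-orbit through the conjugacy, and transfer of the tracing vector back — you merely write out the direction the paper dismisses by symmetry. Your explicit replacement of $h_i$ by $Dh_i$ on tangent vectors and the differentiated conjugacy identity $D\tilde f_i\circ Dh_i=Dh_{i+1}\circ Df_i$ is the correct reading of the paper's (notationally looser) argument, which applies $h_i$ and $h_i^{-1}$ directly to tangent vectors.
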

\begin{proof}

Suppose $L$-family $(M, g, F)$ has shadowing property of type $I$. Given $\epsilon>0$, because of uniform continuity of $h$ there exists $0< \epsilon_0< \epsilon$ such that $ \vert g(u_i, v_i) \vert<\epsilon_0 $ implies $ \vert \tilde{g} (h_i(u_i), h_i(v_i))\vert < \epsilon$ for all $u_i, v_i\in T_p M_i$. Since $L$-family $(M, g, F)$ has shadowing property of type $I$ there exists $0< \delta_0 <\epsilon_0$ such that for every $\delta_0$-pseudo orbit of $L$-family $(M, g, F)$ is $\epsilon_0$-traced by $w\in T_pM_i$. By uniform continuous of $h^{-1}$, there exists $0<\delta<\delta_0$ such that $ \vert \tilde{g}(\tilde{u}_i, \tilde{v}_i)\vert<\delta $ implies $ g(h_i^{-1}(\tilde{u}_i), h_i^{-1}(\tilde{v}_i)) < \delta_0$ for any vector $\tilde{u}_i, \tilde{v}_i\in T_{\tilde{p}}\tilde{M}_i$. Now we prove that every $\delta$-pseudo orbit of $L$-family $(\tilde{M}, \tilde{g}, \tilde{F})$ is $\epsilon$-traced by some vector of $T_{\tilde{p}}\tilde{M}_i$. Suppose $\{ \tilde{v}_i\}$ is a $\delta$-pseudo orbit of $L$-family $(\tilde{M}, \tilde{g}, \tilde{F})$ i.e
$ \vert \tilde{g}_{i+1}(\tilde{f} _i (\tilde{v}_i), \tilde{v}_{i+1})\vert < \delta$ so $ \vert g_{i+1}(h_{i+1}^{-1}\tilde{f} _i (\tilde{v}_i), h_{i+1}^{-1}(\tilde{v}_{i+1}))\vert < \delta_0$ then $ \vert g_{i+1}(f_i\circ h^{-1}_i (\tilde{v}_i), h_{i+1}^{-1}(\tilde{v}_{i+1}))\vert < \delta_0$. Hence $\{h_i^{-1}(\tilde{v}_i)\}$ is a $ \delta_0 $-pseudo orbit for $L$-family $(M, g, F)$. Thus there exist $w\in T_p M_i$ such that
$ \vert g_{i+n} (F_i^n(w), h_{i+n}^{-1} (\tilde{v}_{i+n})\vert <\epsilon _0 $ for $ n\in \mathbb{Z} $ . So by using uniform continuous of $h$ we have $ \vert\tilde{g}_{i+n} (h_{i+n}F_i^n(w), \tilde{v}_{i+n}\vert <\epsilon$ for $n\in \mathbb{Z}$. Because $ h_{i+n} \circ F_i^n(w)=\tilde{F}_i^n \circ h_i(w) $ so $ \vert\tilde{g}_{i+n}(\tilde{F}_i^n(\tilde{w}), \tilde{v}_{i+n}\vert <\epsilon $ for $n\in \mathbb{Z}$ where $h_i (w)=\tilde{w}\in T_{\tilde{p}}\tilde{M}_i$. Hence $\{\tilde{v}_i\}$ is $\epsilon$- traced by $\tilde{w}$. The converse is proved by similar argument.\\
\end{proof}
For the next theorem we define a metric on all non-empty compact subspaces of $TM=\cup_{i\in\mathbb{Z}} TM_i$. Because $T_p M_i\cong \mathbb{R}^n$ where dim $M_i=n$ we put usual topology on $T_pM_i$. Suppose $\mathcal{K}(M_i)$ denotes the hyperspace of all non-empty compact subspaces of $T_{p }M_i$. Again consider $\gamma: (-\epsilon, \epsilon)\rightarrow M_i$ is a smooth curve passing throught $p$. If $u\in T_{\gamma(t)} M_i$ and $K$ is a non-empty compact subspace of $T_{\gamma(\zeta)} M_i$ then $d(u, K)= \inf \{\vert g_{\gamma(\zeta)} ( P_{\zeta-t}(u),v ) \vert : v\in K\}$ where $t, \zeta\in (-\epsilon, \epsilon)$. So for two non-empty compact subspaces $A$ and $B$ of $T_{\gamma(t)} M_i$ and $T_{\gamma(\zeta)} M_i$ we define $d(A, B)=max \{max~~ d(u, B) , max~~ d(v, A) : u\in A ~~~and~~~v\in B \}$. Consider homeomorphism $D_p f_i : T_pM_i\rightarrow T_{f_i(p)}M_{i+1}$ for $i\in\mathbb{Z}$, it induces a homeomorphism $ \mathcal{F} : \mathcal{K}(M_i)\rightarrow \mathcal{K}(M_{i+1})$ by $\mathcal{F}(K)=D_p f_i(K)$ for every $K\in\mathcal{K}(M_i)$ where $\mathcal{F}(K)=\{D_p f_i(k) : k\in K\}$.
Now we extend definitions \ref{d1} and \ref{d2} on $\mathcal{K}(M)=\cup_{i\in\mathbb{Z}}\mathcal{K}(M_i)$\\
Consider a $L$-family $(M, \langle . , .\rangle, F)$ for $ \delta>0 $, the sequence $ \{V_i\}_{i\in \mathbb{Z}} $ where $V_i\in \mathcal{K} (M_i)$ is said to be a $\delta$-pseudo orbit of type $II$ if $ d (\mathcal{F}(V_i), V_{i+1})< \delta $, for $i\in \mathbb{Z}$.\\
For given $\epsilon>0$ a $ \delta $-pseudo orbit $ \{V_i\}_{i\in\mathbb{Z}} $ is called to be $ \epsilon $-traced by $ W\in\mathcal{K}(M_i )$
if $ d(\mathcal{F}_i^n(W), V_{i+n})) <\epsilon $ for $n\in\mathbb{Z}$. \\
A $L$-family $(M, \langle . , .\rangle, F)$ is said to have Lorentzian shadowing property of type $II$ if for every $\epsilon>0$, there exists a $ \delta>0 $ such that every $ \delta $-pseudo orbit is $ \epsilon $-traced by some vector in $\mathcal{K} (M_i) $.
\begin{theorem}\label{u3}
If $L$-family $(M, g, F)$ has shadowing property of type $ II $ then it has shadowing property of type $ I $.
\end{theorem}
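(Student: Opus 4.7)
The plan is to reduce shadowing of type $I$ to shadowing of type $II$ by identifying each vector $v\in T_pM_i$ with the singleton compact set $\{v\}\in\mathcal{K}(M_i)$. Under this identification, the hyperspace distance $d$ collapses to the quantity $|g(\cdot,\cdot)|$ that governs type $I$ pseudo orbits and tracing, so the quantitative bounds transfer from one setting to the other with the same constants.

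First I would fix $\epsilon>0$ and invoke the type $II$ shadowing hypothesis to obtain $\delta>0$ such that every $\delta$-pseudo orbit of type $II$ is $\epsilon$-traced by some element of $\mathcal{K}(M_i)$. Next, given any $\delta$-pseudo orbit $\{v_i\}_{i\in\mathbb{Z}}$ of type $I$, I would form the compact sets $V_i:=\{v_i\}\in\mathcal{K}(M_i)$. Since $\mathcal{F}(V_i)=\{D_pf_i(v_i)\}$ and the hyperspace distance between two singletons at the matching base point reduces to $|g_{i+1}(\cdot,\cdot)|$, the hypothesis $|g_{i+1}(D_pf_i(v_i),v_{i+1})|<\delta$ becomes $d(\mathcal{F}(V_i),V_{i+1})<\delta$. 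Hence $\{V_i\}$ is a $\delta$-pseudo orbit of type $II$.

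By the shadowing assumption there exists $W\in\mathcal{K}(M_i)$ with $d(\mathcal{F}_i^n(W),V_{i+n})<\epsilon$ for every $n\in\mathbb{Z}$. Unpacking the definition of the hyperspace distance, this forces $\max_{u\in\mathcal{F}_i^n(W)} d(u,\{v_{i+n}\})<\epsilon$; that is, $|g_{i+n}(D_pF_i^n(w),v_{i+n})|<\epsilon$ for every $w\in W$ and every $n\in\mathbb{Z}$. Selecting any element $w\in W$ (which is non-empty by the definition of $\mathcal{K}(M_i)$) then produces a vector that $\epsilon$-traces $\{v_i\}$ in the sense of type $I$, so type $I$ shadowing holds with the same $\delta$.

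I do not expect a substantial obstacle: the whole argument is essentially the observation that singletons embed the type $I$ framework into the type $II$ framework in a way that preserves the relevant quantitative data. The only delicate point to verify is that the parallel translation appearing in the definition of $d$ is trivial on the matching base points in our construction, so that $d(\{u\},\{v\})$ really does reduce to $|g(u,v)|$; once this book-keeping is confirmed the implication is immediate.
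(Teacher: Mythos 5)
Your proof is correct and follows essentially the same route as the paper: embed the type $I$ data into the hyperspace setting via singletons $V_i=\{v_i\}$, observe that the hyperspace distance between singletons at matching base points reduces to $\vert g(\cdot,\cdot)\vert$, apply type $II$ shadowing, and extract any $w\in W$ as the tracing vector. Your version is in fact slightly tidier than the paper's in that you fix $\epsilon$ and obtain $\delta$ from the type $II$ hypothesis \emph{before} introducing the pseudo orbit, whereas the paper states these quantifiers in a less careful order.
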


 \begin{proof}
Suppose $L$-family $(M, g, F)$ has shadowing property of type $ II $. Let $\{v_i\}_{i\in\mathbb{Z}}$ be a $\delta$-pseudo orbit of type $I$ then $\{\{v_i\}\}_{i\in\mathbb{Z}}$ is a $\delta$-pseudo orbit of type $II$. For any $\epsilon>0$ a $ \delta $-pseudo orbit $\{ V_i=\{v_i\}\}_{i\in\mathbb{Z}} $ is $ \epsilon $-traced by $ W\in\mathcal{K}(M_i )$. \\ Then $ d(\mathcal{F}_i^n(W), V_{i+n}) <\epsilon $ for $n\in\mathbb{Z}$. We can write $\vert g_{i+n}(F_i^n(w), v_{i+n})\vert <\epsilon $ where $w\in W$ and $v_{i+n}\in V_{i+n}$ for $n\in \mathbb{Z}$. So $L$-family $(M, g, F)$ has shadowing property of type $ I $.
\end{proof}

 \section{Conclusion}
We extend the notion of Anosov family on a sequence of compact Riemannian manifolds $\{M_i\}_{i\in\mathbb{Z}} $ to Lorentzian Anosov family by using of a distributions $p\mapsto E^n(p)$. We see that when each $M_i$ is a Lorentzian manifold then the tangent space of $M_i$ at $p\in M_i$ has a unique splitting to stable, unstable and a subset of null vectors which is also a vector space. We consider the behavior of this splitting by using of the distance function created by the unique torsion-free connection. \\
 Lorentzian shadowing property of type $I$ and type $II$ on $L$-family $(M, \langle . , .\rangle, F)$ and on the induced dynamic by it has been introduced. We prove shadowing property of type $I$ is invariant by uniformly conjugacy. Also we prove if $L$-family $(M, g, F)$ has shadowing property of type $ II $ then the induced dynamic of it has shadowing property of type $ I $.

\end{document}